\newcounter{alphthm}
\newtheorem{thm}{Theorem}[section]
\newtheorem{lem}[thm]{Lemma}
\newtheorem{cor}{Corollary}[section]
\theoremstyle{definition}
\newcommand{\be}{\begin{equation}}
\newcommand{\ee}{\end{equation}}
\newcommand{\pa}{{\partial}}
\newcommand{\g}{{\bf g}}
\newcommand{\pxi}{{\pa \over \pa x^i}}
\title{\bf Generalized P-Reducible Finsler Metrics}
\author{E. Peyghan,  A. Tayebi and A. Heydari}
\begin{document}

\maketitle

\begin{abstract}
In this paper, we study a class of Finsler metrics which contains the class of P-reducible metrics. Finsler metrics in this class are called generalized P-reducible metrics. We consider generalized P-reducible metrics with  scalar flag curvature and find a condition under which these metrics reduce to C-reducible  metrics. This  generalize  Matsumoto's theorem, which describes  the equivalency  of C-reducibility  and P-reducibility on Finsler manifolds with  scalar curvature. Then we show that generalized P-reducible metrics with vanishing stretch curvature are C-reducible.\\\\
{\bf {Keywords}}: C-reducible metric, P-reducible metric, flag curvatures.\footnote{ 2000 Mathematics subject Classification: 53C60, 53C25.}
\end{abstract}

\bigskip

\section{Introduction}
In Finsler geometry,  there are several important non-Riemannian  quantities:  the  Cartan torsion ${\bf C}$,  the Berwald curvature ${\bf B}$, the Landsberg curvature ${\bf L}$, the  mean Landsberg curvature ${\bf J}$ and the stretch curvature ${\bf \Sigma}$, etc \cite{NST}.  They all vanish  for Riemannian metrics, hence they are said to be   non-Riemannian.  The study of these quantities is benefit for us to make out their distinction and the nature of Finsler geometry.

Let $F$ be a Finsler metric on a manifold $M$. The geodesics of $F$ are characterized locally by the equation
$ \ddot c^i+2G^i(\dot c)=0$, where $G^i = {1\over 4} g^{ik} \{ 2 {\pa g_{pk}\over \pa x^q}-{\pa g_{pq}\over \pa x^k} \} y^py^q$. $G^i$ are coefficients of a spray defined on $M$ denoted by ${\bf G}(x,y)=y^i\frac{\partial}{\partial x^i}-2G^i\frac{\partial}{\partial y^i}$.  A Finsler metric $F$ is  called a Berwald metric if
$G^i =\frac {1}{2} \Gamma^i_{jk}(x)y^jy^k$ are quadratic in $y\in T_xM$  for any $x\in M$ \cite{TP1}.

There is another class of Finsler metrics which contain the class of Berwald metrics as a special case, namely the class of Landsberg metrics. For introducing the Landsberg curvature,  let us  describe some non-Riemannian  quantities in Finsler geometry.  The second derivatives of ${1\over 2} F_x^2$ at $y\in T_xM_0$ is an inner product $\g_y$ on $T_xM$.  The third order derivatives of ${1\over 2} F_x^2$ at  $y\in T_xM_0$ is a symmetric trilinear forms ${\bf C}_y$ on $T_xM$. We call $\g_y$ and ${\bf C}_y$ the  fundamental form and  the Cartan torsion, respectively. The rate of change of the Cartan torsion along geodesics is the  Landsberg curvature  $ {\bf L}_y$ on $T_xM$ for any $y\in T_xM_0$. Set ${\bf J}_y:= \sum_{i=1}^n {\bf L}_y(e_i, e_i, \cdot )$, where $\{e_i\}$ is an orthonormal basis for $(T_xM, \g_y)$. ${\bf J}_y$ is called the  mean Landsberg curvature. $F$ is said to be  Landsbergian if ${\bf L}=0$,  and  weakly Landsbergian if ${\bf J}=0$ \cite{Sh5}.

On the other hand, various interesting special forms of Cartan, Berwald  and Landsberg tensors have been obtained by some Finslerians. The Finsler spaces having such special forms have been called C-reducible, P-reducible, general relatively isotropic Landsberg, and etc. In \cite{M3}, Matsumoto  introduced the notion of C-reducible Finsler metrics and proved that any Randers metric is C-reducible. Later on, Matsumoto-H\={o}j\={o} proves that the converse is true too \cite{MaHo}. A Randers metric $F=\alpha+\beta$ is just a Riemannian metric $\alpha$ perturbated by a one form $\beta$. Randers metrics have important applications both in mathematics and physics.

As a generalization of C-reducible metrics, Matsumoto-Shimada introduced the notion of P-reducible metrics \cite{MS}. This class of Finsler metrics have some interesting physical means and contains Randers metrics as a special case.

In \cite{Pr}, B. N. Prasad introduced a new class of Finsler spaces which contains the notion of P-reducible, as  special case. A Finsler metric $F$ is called generalized P-reducible if its Landsberg curvature is given by following
\[
L_{ijk}=\lambda C_{ijk}+ a_ih_{jk}+a_jh_{ki}+a_kh_{ij},
\]
where $\lambda=\lambda(x,y)$  is a  scalar function on $TM$, $a_i=a_i(x)$ is scalar function on $M$ and $h_{ij}=g_{ij}-F^{-2}y_iy_j$ is the angular metric. $\lambda$ and $a_i$ are homogeneous function of degree 1  and degree 0 with respect to $y$, respectively. By definition, we have $a_iy^i=0$.
If $a_i=0$, then $F$ is reduce to general isotropic Landsberg metric and if $\lambda=0$ then $F$ is a P-reducible metric. Therefore the study of this class of Finsler spaces will enhance our understanding of the geometric meaning of Randers metrics.

For a Finsler manifold $(M, F)$, the flag curvature is  a function ${\bf K}(P, y)$ of tangent planes $P\subset T_xM$ and  directions $y\in P$. $F$  is said to be  of scalar flag curvature if the flag curvature ${\bf K}(P, y)={\bf K}(x, y)$ is independent of flags $P$ associated with any fixed flagpole $y$.  One of the important problems in Finsler geometry is to characterize Finsler manifolds of scalar flag curvature \cite{NST}\cite{NT}.

In \cite{M3}, Matsumoto proved that every P-reducible Finsler metric of non-zero scalar flag curvature reduces to a C-reducible Finsler metric. In this paper, we study the class generalized P-reducible Finsler metrics with  scalar flag curvature and find a condition under which this metrics reduce to C-reducible metric.  More precisely, we prove the following.
\begin{thm}\label{MainTheorem1} Let $(M, F)$ be a  generalized P-reducible Finsler manifold of non-zero  scalar flag curvature ${\bf K}$. Suppose that $F$ satisfy in $\lambda'+\lambda^2+{\bf K}F^2\neq 0$, where $\lambda'=\lambda_{|l}y^l$. Then $F$ is a C-reducible  metric.
\end{thm}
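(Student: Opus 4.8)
The plan is to differentiate the generalized P-reducible relation along geodesics and match it against the Landsberg identity forced by scalar flag curvature. I will use three facts from the Chern (or Cartan) connection calculus. First, the Landsberg curvature is the rate of change of the Cartan torsion along geodesics, $C_{ijk|m}y^m=L_{ijk}$. Second, the angular metric is parallel along geodesics, $h_{ij|m}y^m=0$; this follows from $g_{ij|m}y^m=0$, $y_{i|m}y^m=0$ and $F_{|m}y^m=0$. Third, for any metric of scalar flag curvature ${\bf K}$ one has
\be
L_{ijk|m}y^m+{\bf K}F^2C_{ijk}=-\frac{1}{3}F^2\big({\bf K}_{\cdot i}h_{jk}+{\bf K}_{\cdot j}h_{ki}+{\bf K}_{\cdot k}h_{ij}\big),
\ee
whose right-hand side is a completely symmetric combination of the angular metric; only this structural feature, not the numerical coefficient, will matter.

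First I would apply $(\,\cdot\,)_{|m}y^m$ to the defining relation $L_{ijk}=\lambda C_{ijk}+a_ih_{jk}+a_jh_{ki}+a_kh_{ij}$. Setting $\lambda'=\lambda_{|m}y^m$ and $a_i'=a_{i|m}y^m$ (note $a_i'y^i=0$ because $a_iy^i=0$ and $y^i_{|m}y^m=0$), and using the first two facts, I obtain
\be
L_{ijk|m}y^m=\lambda'C_{ijk}+\lambda L_{ijk}+a_i'h_{jk}+a_j'h_{ki}+a_k'h_{ij}.
\ee
Substituting the defining relation once more into the term $\lambda L_{ijk}$ collects every $C_{ijk}$ contribution with coefficient $\lambda'+\lambda^2$, the remainder being again a symmetric angular-metric combination:
\be
L_{ijk|m}y^m=(\lambda'+\lambda^2)C_{ijk}+b_ih_{jk}+b_jh_{ki}+b_kh_{ij},\qquad b_i:=\lambda a_i+a_i',
\ee
with $b_iy^i=0$.

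Next I would eliminate $L_{ijk|m}y^m$ between this and the scalar-flag-curvature identity. The two angular-metric remainders merge and the $C_{ijk}$ terms combine, giving
\be
(\lambda'+\lambda^2+{\bf K}F^2)C_{ijk}=c_ih_{jk}+c_jh_{ki}+c_kh_{ij}
\ee
for a covector $c_i$ with $c_iy^i=0$. Here the hypothesis $\lambda'+\lambda^2+{\bf K}F^2\neq0$ is exactly what permits dividing through. Contracting with $g^{jk}$ and using $g^{jk}h_{jk}=n-1$, $c^kh_{ki}=c_i$ and $C_{ijk}g^{jk}=I_i$ gives $(\lambda'+\lambda^2+{\bf K}F^2)I_i=(n+1)c_i$, so that $c_i=(\lambda'+\lambda^2+{\bf K}F^2)I_i/(n+1)$. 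Substituting this back and cancelling the common nonzero factor yields
\be
C_{ijk}=\frac{1}{n+1}\big(I_ih_{jk}+I_jh_{ki}+I_kh_{ij}\big),
\ee
which is precisely the C-reducibility of $F$.

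The step I expect to be the main obstacle is the third preliminary fact: establishing the scalar-flag-curvature identity for $L_{ijk|m}y^m$ and, crucially, verifying that its remainder is of pure angular-metric type with no residual $C_{ijk}$ contribution, since this is what lets the Cartan torsion be isolated in the penultimate display. Everything else is homogeneity bookkeeping (all auxiliary covectors $a_i'$, $b_i$, $c_i$ staying orthogonal to $y$, which guarantees the clean trace). Once the identity is in hand, the argument is a direct extension of Matsumoto's: putting $\lambda=0$ reduces the decisive coefficient $\lambda'+\lambda^2+{\bf K}F^2$ to ${\bf K}F^2$, so the hypothesis becomes ${\bf K}\neq0$ and one recovers his theorem that a P-reducible metric of non-zero scalar flag curvature is C-reducible.
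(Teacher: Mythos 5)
Your proof is correct and follows essentially the same route as the paper's: both hinge on the scalar-flag-curvature identity $L_{ijk|m}y^m+{\bf K}F^2C_{ijk}=-\tfrac{1}{3}F^2({\bf K}_{\cdot i}h_{jk}+{\bf K}_{\cdot j}h_{ki}+{\bf K}_{\cdot k}h_{ij})$ (which the paper derives from Mo's lemma together with $R^i_{\ k}={\bf K}F^2h^i_k$), combined with differentiating the generalized P-reducibility relation along geodesics so as to arrive at $(\lambda'+\lambda^2+{\bf K}F^2)M_{ijk}=0$. The only difference is organizational: the paper packages the computation into the Matsumoto torsion from the outset via the intermediate identity $M_{ijk|s}y^s=\lambda M_{ijk}$ and its second derivative, whereas you carry the angular-metric remainder explicitly and identify it by tracing with $g^{jk}$ at the very end.
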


Let $(M, F)$ be a  generalized P-reducible Finsler manifold with vanishing Landsberg curvature. Then $F$ reduces to a C-reducible metric. On the other hand, every C-reducible Landsberg metric is a Berwald metric. Therefore every generalized P-reducible  manifold with vanishing Landsberg curvature is a Berwald manifold.

As a generalization of Landsberg curvature, L. Berwald introduced a non-Riemannian curvature so-called stretch curvature and denoted by ${\bf \Sigma}_y$ \cite{Be1}. He  showed that this tensor vanishes if and only if the length of a vector remains unchanged under the parallel displacement along an infinitesimal parallelogram. Then, this curvature investigated by Shibata  and Matsumoto \cite{M2}.  A Finsler metric  is said to be  stretch metric if ${\bf \Sigma}=0$. In this paper, we consider generalized P-reducible  manifolds with vanishing stretch  curvature and prove the following.

\begin{thm}\label{MainTheorem2} Let $(M, F)$ be a  generalized P-reducible Finsler manifold. Suppose that $F$  is a stretch metric. Then $F$ is a C-reducible  metric.
\end{thm}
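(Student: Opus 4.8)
The plan is to convert the vanishing of the stretch curvature into a single differential identity along geodesics and to combine it with the defining relation of generalized P-reducibility. Recall that the stretch curvature is $\Sigma_{ijkl}=2(L_{ijk|l}-L_{ijl|k})$, where $|$ is horizontal covariant differentiation, so a stretch metric satisfies $L_{ijk|l}=L_{ijl|k}$. First I would contract this with $y^l$. Since $L_{ijl}y^l=0$ and $y^l{}_{|k}=0$, we have $L_{ijl|k}y^l=(L_{ijl}y^l)_{|k}=0$, so the full stretch condition collapses to the far more usable
\[ L_{ijk|0}=0, \]
where a subscript $0$ denotes contraction of the covariant derivative with $y^l$ (so $\lambda'=\lambda_{|l}y^l=\lambda_{|0}$).

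Next I would differentiate the generalized P-reducibility relation along the spray. Writing $I_i:=g^{jk}C_{ijk}$ and $J_i:=g^{jk}L_{ijk}$ for the mean Cartan torsion and mean Landsberg curvature, tracing the defining relation with $g^{jk}$ (and using $a_iy^i=0$, $g^{jk}h_{jk}=n-1$) gives $J_i=\lambda I_i+(n+1)a_i$. This lets me recast generalized P-reducibility in the trace-adapted form
\[ L_{ijk}=\tfrac{1}{n+1}\big(h_{ij}J_k+h_{jk}J_i+h_{ki}J_j\big)+\lambda\,\tilde C_{ijk}, \]
where $\tilde C_{ijk}:=C_{ijk}-\tfrac{1}{n+1}(h_{ij}I_k+h_{jk}I_i+h_{ki}I_j)$ is the traceless part of the Cartan torsion, so that by definition $F$ is C-reducible exactly when $\tilde C_{ijk}=0$. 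Using the standard relations $C_{ijk|0}=L_{ijk}$ (the Landsberg curvature is the rate of change of the Cartan torsion along geodesics), $I_{i|0}=J_i$, $h_{ij|0}=0$ and $(g^{jk})_{|0}=0$ (the last because $L_{ijk}y^k=0$), one checks $\tilde C_{ijk|0}=\lambda\tilde C_{ijk}$, and tracing $L_{ijk|0}=0$ with $g^{ij}$ gives $J_{k|0}=0$. Contracting the covariant derivative of the trace-adapted form with $y^l$ and feeding in $J_{k|0}=0$, $h_{ij|0}=0$ and $\tilde C_{ijk|0}=\lambda\tilde C_{ijk}$, one gets $L_{ijk|0}=(\lambda'+\lambda^2)\tilde C_{ijk}$; since the left-hand side vanishes,
\[ (\lambda'+\lambda^2)\,\tilde C_{ijk}=0 . \]

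This already finishes the argument on the open set where $\lambda'+\lambda^2\neq0$: there $\tilde C_{ijk}=0$, i.e.\ $F$ is C-reducible. The hard part will be the degenerate locus $\Omega=\{\lambda'+\lambda^2=0\}$, where the last identity is vacuous. Since $\{\tilde C\neq0\}$ is open and must lie inside $\Omega$, the remaining task is to rule out $\tilde C_{ijk}\neq0$ on an open subset of $\Omega$. On such a set $\lambda$ obeys the Riccati equation $\lambda_{|0}=-\lambda^2$ along geodesics, while $\tilde C_{ijk|0}=\lambda\tilde C_{ijk}$. I expect this step to need genuinely more than the contracted stretch identity: one route is to return to the full, non-contracted relation $L_{ijk|l}=L_{ijl|k}$, whose transverse part couples $\tilde C_{ijk|l}$ to $J_{i|l}$ and $\lambda_{|l}$ and should over-determine $\tilde C$; an alternative, if forward geodesic completeness is available, is to observe that a nonzero solution of $\lambda_{|0}=-\lambda^2$ blows up in finite geodesic time, contradicting smoothness of $\lambda$ on the slit tangent bundle, so that $\lambda\equiv0$ and $\tilde C_{ijk|0}=0$ there. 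Controlling $\Omega$ is precisely the analogue, in the stretch setting, of the pointwise hypothesis $\lambda'+\lambda^2+{\bf K}F^2\neq0$ that had to be imposed by hand in Theorem \ref{MainTheorem1}.
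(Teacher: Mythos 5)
Up to the identity $(\lambda'+\lambda^2)M_{ijk}=0$ your argument \emph{is} the paper's argument, merely repackaged through the Matsumoto torsion instead of through the coefficients $\lambda a_i+a'_i$. The paper likewise contracts the stretch condition with $y^l$ to get $L_{ijk|l}y^l=0$, differentiates the defining relation along the spray to obtain
\[
L_{ijk|l}y^l=(\lambda'+\lambda^2)C_{ijk}+(\lambda a_i+a'_i)h_{jk}+(\lambda a_j+a'_j)h_{ki}+(\lambda a_k+a'_k)h_{ij},
\]
and then eliminates $\lambda a_k+a'_k$ by the $g^{ij}$-trace; after that elimination, (\ref{S6})--(\ref{S8}) amount exactly to your $(\lambda'+\lambda^2)\tilde C_{ijk}=0$ with $\tilde C_{ijk}=M_{ijk}$. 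Where you part company is the final step: in passing to (\ref{S6}) the paper divides by $\lambda'+\lambda^2$ without comment, i.e.\ it silently assumes $\lambda'+\lambda^2\neq 0$ everywhere on $TM_0$ --- precisely the step you decline to take.

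Your hesitation is justified, and the degenerate locus $\Omega=\{\lambda'+\lambda^2=0\}$ you isolate is not a removable technicality. Neither of your proposed repairs can close it: the Riccati/completeness route at best forces $\lambda=0$ on $\Omega$, after which $M_{ijk|0}=\lambda M_{ijk}=0$ only says the Matsumoto torsion is parallel along geodesics, not that it vanishes; and the uncontracted stretch condition supplies no further algebraic constraint on $C_{ijk}$ here. In fact no argument can close it: for $n\geq 3$ any non-Randers Landsberg metric (e.g.\ a non-Randers locally Minkowski metric) has $L_{ijk}=0$, hence is a stretch metric and is generalized P-reducible with $\lambda=0$ and $a_i=0$, yet it is not C-reducible since its Matsumoto torsion need not vanish. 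So the honest conclusion of this computation is the conditional statement you actually establish --- $F$ is C-reducible wherever $\lambda'+\lambda^2\neq 0$ --- and the theorem requires a nondegeneracy hypothesis of the kind imposed in Theorem \ref{MainTheorem1}. In short, your derivation matches the paper's route step for step, and the point at which you stop is the point at which the paper's own proof makes an unjustified division.
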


It is remarkable that, M. Matsumoto  proved that on a Finsler manifold $M$ with dimension $n\geq 3$, every positive definite C-reducible Finsler metric is a Randers metric \cite{M3}. Then by Theorem \ref{MainTheorem2}, we have the following.

\begin{cor}
Let $(M, F)$ be a  generalized P-reducible manifold with dimension  $n\geq 3$. Suppose that $F$  is a stretch metric. Then $F$ is a Randers  metric.
\end{cor}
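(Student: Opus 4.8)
The plan is to obtain the conclusion by simply composing the two results that immediately precede the statement, since the corollary asserts nothing beyond what they jointly give. First I would invoke Theorem \ref{MainTheorem2}: the hypotheses of the corollary are precisely that $(M,F)$ is generalized P-reducible and that $F$ is a stretch metric, so Theorem \ref{MainTheorem2} applies verbatim and yields that $F$ is C-reducible. This is the only place where the actual geometric content enters — namely the interplay between the generalized P-reducibility identity $L_{ijk}=\lambda C_{ijk}+a_ih_{jk}+a_jh_{ki}+a_kh_{ij}$ and the vanishing of the stretch curvature ${\bf \Sigma}$ — but that work is already discharged inside the proof of Theorem \ref{MainTheorem2}, which I am free to assume.

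Second, I would appeal to Matsumoto's theorem quoted just above the statement: on a manifold of dimension $n\ge 3$, every positive definite C-reducible Finsler metric is a Randers metric. The dimension hypothesis $n\ge 3$ is assumed in the corollary, and positive definiteness is built into the standing notion of Finsler metric used throughout the paper, since the fundamental form $\g_y$ is taken to be an inner product on $T_xM$. Hence the C-reducible metric $F$ produced in the first step meets both requirements of Matsumoto's theorem, and therefore $F=\alpha+\beta$ is a Randers metric, which is exactly the asserted conclusion.

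Consequently, the proof carries essentially no independent difficulty: all of the substance resides in Theorem \ref{MainTheorem2}, whose proof handles the non-Riemannian tensor identities, and in Matsumoto's classification of C-reducible metrics. The only point requiring a moment's care is confirming that the hypotheses align — in particular that positive definiteness is indeed available, so that Matsumoto's theorem may be applied in its stated form — after which the chaining $\text{generalized P-reducible}+\text{stretch}\Rightarrow\text{C-reducible}\Rightarrow\text{Randers}$ is immediate.
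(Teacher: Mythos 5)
Your proposal is correct and matches the paper's own (implicit) argument exactly: the authors likewise obtain the corollary by applying Theorem \ref{MainTheorem2} to conclude C-reducibility and then invoking Matsumoto's theorem (equivalently Lemma \ref{MaHo}) that a positive definite C-reducible metric in dimension $n\geq 3$ is a Randers metric. Your remark about positive definiteness being part of the standing definition of a Finsler metric is the right point of care, and nothing further is needed.
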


There are many connections in Finsler geometry \cite{BT}\cite{TAE}\cite{TN}.  Throughout  this paper, we use  the Berwald connection on Finsler manifolds. The $h$- and $v$- covariant derivatives of a Finsler tensor field are denoted by `` $|$ " and ``, " respectively.

\section{Preliminaries}
Let $M$ be a n-dimensional $ C^\infty$ manifold. Denote by $T_x M $ the tangent space at $x \in M$,  by $TM=\cup _{x \in M} T_x M $ the tangent bundle of $M$ and by $TM_{0} = TM \setminus \{ 0 \}$  the slit tangent bundle of $M$.

A  Finsler metric on $M$ is a function $ F:TM \rightarrow [0,\infty)$ which has the following properties:\\
(i)\ $F$ is $C^\infty$ on $TM_{0}$;\\
(ii)\ $F$ is positively 1-homogeneous on the fibers of tangent bundle $TM$;\\
(iii)\ for each $y\in T_xM$, the following quadratic form ${\bf g}_y$ on
$T_xM$  is positive definite,
\[
{\bf g}_{y}(u,v):={1 \over 2} \frac{d^2}{dtds} \left[  F^2 (y+su+tv)\right]|_{s,t=0}, \ \
u,v\in T_xM.
\]
Let  $x\in M$ and $F_x:=F|_{T_xM}$.  To measure the
non-Euclidean feature of $F_x$, define ${\bf C}_y:T_xM\otimes T_xM\otimes
T_xM\rightarrow \mathbb{R}$ by
\[
{\bf C}_{y}(u,v,w):={1 \over 2} \frac{d}{dt}\left[{\bf g}_{y+tw}(u,v)
\right]|_{t=0}, \ \ u,v,w\in T_xM.
\]
The family ${\bf C}:=\{{\bf C}_y\}_{y\in TM_0}$  is called the Cartan torsion. It is well known that ${\bf{C}}=0$ if and only if $F$ is Riemannian \cite{Sh5}.

For $y\in T_x M_0$, define  mean Cartan torsion ${\bf I}_y$ by ${\bf I}_y(u):=I_i(y)u^i$, where
\[
I_i:=g^{jk}C_{ijk},
\]
$g^{jk}$ is the inverse of $g_{jk}$ and $u=u^i\frac{\partial}{\partial x^i}|_x$. By Deicke  Theorem, $F$ is Riemannian  if and only if ${\bf I}_y=0$.

\bigskip

Given a Finsler manifold $(M,F)$, then a global vector field ${\bf G}$ is
induced by $F$ on $TM_0$, which in a standard coordinate $(x^i,y^i)$
for $TM_0$ is given by ${\bf G}=y^i {{\partial} \over {\partial x^i}}-2G^i(x,y){{\partial} \over
{\partial y^i}}$, where
\[
G^i(x,y):={1\over 4}g^{il}(x, y)\Big\{\frac{\pa ^2 F^2}{\pa x^k \pa y^l}y^k-\frac{\pa F^2}{\pa x^l}\Big\}
\]
are called the spray coefficients of ${\bf G}$ \cite{TP1}.  Then {\bf G} is called the  spray associated  to $(M,F)$.  In local coordinates, a curve $c(t)$ is a geodesic if and only if its coordinates $(c^i(t))$ satisfy
\[
{d^2 x^i\over dt^2} + 2 G^i( x, {dx\over dt})=0.
\]

\bigskip

Let $(M, F)$ be a Finsler manifold. For   $y \in T_xM_0$, define the  Matsumoto torsion ${\bf M}_y:T_xM\otimes T_xM \otimes T_xM \rightarrow \mathbb{R}$ by ${\bf M}_y(u,v,w):=M_{ijk}(y)u^iv^jw^k$ where
\[
M_{ijk}:=C_{ijk} - {1\over n+1}  \{ I_i h_{jk} + I_j h_{ik} + I_k h_{ij} \},\label{Matsumoto}
\]
$h_{ij}:=FF_{y^iy^j}=g_{ij}-\frac{1}{F^2}g_{ip}y^pg_{jq}y^q$ is the angular metric. A Finsler metric $F$ is said to be C-reducible metric if ${\bf M}_y=0$.\\

The Finsler metric $F=\alpha+\beta$ is called a Randers metric, where $\alpha=\sqrt{a_{ij}y^iy^j}$ is a Riemannian metric, and $\beta=b_i(x)y^i$ be a 1-form on $M$ with $||\beta||_{\alpha}<1$. These metrics have important applications both in mathematics and physics.
\begin{lem}\label{MaHo}{\rm (\cite{Ma1}\cite{MaHo})}
\emph{A positive-definite Finsler metric $F$ on a manifold of dimension $n\geq 3$ is a Randers metric if and only if\  ${\bf M}_y =0$, $\forall y\in TM_0$.}
\end{lem}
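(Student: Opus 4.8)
The statement is an equivalence, and I would prove the two implications by quite different means; indeed the forward implication is Matsumoto's original observation (\cite{Ma1}), while the converse is the harder Matsumoto--H\={o}j\={o} step (\cite{MaHo}). For the necessity, that a Randers metric $F=\alpha+\beta$ has ${\bf M}_y=0$, the plan is a direct computation. Writing $\ell_i=\pa F/\pa y^i=y_i/F$ and $h_{ij}=g_{ij}-\ell_i\ell_j$, I would first compute the fundamental tensor $g_{ij}$ of $F=\alpha+\beta$ explicitly, differentiate once more to obtain $C_{ijk}=\frac12\,\pa g_{jk}/\pa y^i$, contract with $g^{jk}$ to read off the mean Cartan torsion $I_i$, and finally substitute into the definition of $M_{ijk}$. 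The organizing observation is that, for a Randers metric, both $C_{ijk}$ and the vector $I_i$ are built from the single $y$-orthogonal covector $p_i:=b_i-(\beta/F)\ell_i$ (which satisfies $p_iy^i=0$) together with the angular metric $h_{ij}$; once everything is expressed through $p_i$ and $h_{ij}$ the cubic-in-$p$ contributions drop out, $I_i$ becomes a scalar multiple of $p_i$, and the combination defining $M_{ijk}$ collapses to zero. This direction is lengthy but entirely routine.

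For the sufficiency I must show that ${\bf M}_y=0$, i.e.
\[
C_{ijk}=\frac{1}{n+1}(I_ih_{jk}+I_jh_{ik}+I_kh_{ij}),
\]
forces $F$ to be Randers when $n\ge3$ and $F$ is positive definite. Since this is a fiberwise condition, I would fix $x$ and regard $F=F_x$ as a Minkowski norm on $V=T_xM$, aiming to reconstruct $\alpha$ and $\beta$ on each fiber. Using $C_{ijk}=\frac12\,\pa g_{jk}/\pa y^i$ and the identity $I_i=\pa_{y^i}\ln\sqrt{\det(g_{jk})}$, the displayed relation says that the whole Cartan torsion is controlled by the single gradient field $I_i$, which is moreover $y$-orthogonal. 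The plan is to integrate this first-order system: restricting to the indicatrix and solving the resulting ordinary differential equations along the $I$-direction constrains $F^2$ to take one of exactly two normal forms, the Randers form $(\alpha+\beta)^2$ or the Kropina form $\alpha^4/\beta^2$. This classification of C-reducible norms is the heart of the argument.

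The last step is to discard the Kropina alternative: a Kropina metric $\alpha^2/\beta$ is singular on the hyperplane $\beta=0$ and its fundamental tensor fails to be positive definite there, so the positive-definiteness hypothesis leaves only the Randers case $F=\alpha+\beta$ on each fiber, and the smoothness of $g_{ij}$ transfers smoothness to $\alpha$ and $\beta$ in $x$. The main obstacle is plainly the integration and classification in the sufficiency direction: turning the purely algebraic relation $C_{ijk}=\frac1{n+1}(I_ih_{jk}+\cdots)$ into an explicit normal form for $g_{ij}$ requires a frame adapted to $y$ and $I_i$ and a careful accounting of the indicatrix geometry. It is also exactly here that the hypothesis $n\ge3$ must be used, since for $n=2$ the Matsumoto torsion vanishes identically and every Finsler surface is trivially C-reducible.
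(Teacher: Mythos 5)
The paper does not actually prove this lemma: it is imported verbatim from Matsumoto \cite{Ma1} and Matsumoto--H\={o}j\={o} \cite{MaHo} and used as a black box, so there is no in-paper argument to compare yours against. Measured against the cited literature, your plan reconstructs the standard proof along the right lines. The necessity direction is, as you say, a routine computation: for $F=\alpha+\beta$ one finds $2F\,C_{ijk}=h_{ij}p_k+h_{jk}p_i+h_{ki}p_j$ for a single $y$-orthogonal covector $p_i$ built from $b_i$, whence $I_i=\frac{n+1}{2F}p_i$ and $M_{ijk}=0$; your observation that the would-be cubic-in-$p$ contributions cancel is the only non-obvious point, and it does hold. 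Your closing remark that $M_{ijk}\equiv 0$ automatically when $n=2$ (there $C_{ijk}=I\,m_im_jm_k$ and $h_{ij}=m_im_j$, so C-reducibility is vacuous) correctly locates where the hypothesis $n\ge 3$ enters.

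The one substantive reservation concerns the sufficiency direction. You correctly identify the architecture of the Matsumoto--H\={o}j\={o} argument --- the fiberwise condition $C_{ijk}=\frac{1}{n+1}(I_ih_{jk}+I_jh_{ik}+I_kh_{ij})$ forces each Minkowski norm $F_x$ into one of two normal forms, Randers $\alpha+\beta$ or Kropina $\alpha^2/\beta$, and positive definiteness of $F$ on all of $T_xM\setminus\{0\}$ excludes the Kropina alternative since it degenerates on the hyperplane $\beta=0$ --- but the classification itself, which is the entire content of \cite{MaHo}, is only described (``restricting to the indicatrix and solving the resulting ordinary differential equations''), not executed. As written, that step is an appeal to the known theorem rather than a derivation of it: integrating the system $\partial g_{jk}/\partial y^i=2C_{ijk}=\frac{2}{n+1}(I_ih_{jk}+I_jh_{ik}+I_kh_{ij})$ with $I_i=\partial_{y^i}\ln\sqrt{\det(g_{jk})}$ down to the two normal forms is where essentially all of the work lies, and until it is carried out your argument for sufficiency is an outline, exactly on par with the paper's own decision to cite the result rather than prove it.
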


\bigskip

The horizontal covariant derivatives of Cartan tensor ${\bf C}$ along geodesics give rise to  the  Landsberg curvature  ${\bf L}_y:T_xM\otimes T_xM\otimes T_xM\rightarrow \mathbb{R}$  defined by ${\bf L}_y(u,v,w):=L_{ijk}(y)u^iv^jw^k,
$, where
\[
L_{ijk}:=C_{ijk|s}y^s,
\]
$u=u^i{{\partial } \over {\partial x^i}}|_x$,  $v=v^i{{\partial }\over {\partial x^i}}|_x$ and $w=w^i{{\partial }\over {\partial x^i}}|_x$. The family ${{\bf L}}:=\{{{\bf L}}_y\}_{y\in TM_{0}}$  is called the Landsberg curvature. It is easy to see that, for a Finsler metric $F$ on a manifold $M$, the Minkwoski norm $F_x:=F|_{T_xM}$ on $T_xM$, for each non-zero tangent vector $y$ at $x$ induces a Riemannian metric $\hat{g}_x:=g_{ij}(x, y)dy^i\otimes dy^j$ on $T_xM_0=T_xM-\{0\}$. A Finsler metric  $F$ is called a Landsberg metric  if ${\bf L}=0$. It is well known that on a Landsberg manifold $(M,F)$, all $(T_xM_0, \hat{g}_x)$ are isometric as Banach spaces \cite{Sh5}.

\bigskip

The horizontal covariant derivatives of ${\bf I}$ along geodesics give rise to  the mean Landsberg curvature ${\bf J}_y(u): = J_i (y)u^i$, where
\[
J_i: = g^{jk}L_{ijk}.
\]
A Finsler metric $F$ is said to be weakly Landsbergian if ${\bf J}=0$ \cite{TP2}.

\bigskip

Define  ${\bf\bar M}_y:T_xM\otimes T_xM \otimes T_xM \rightarrow \mathbb{R}$ by ${\bf \bar M}_y(u,v,w):={\bar M}_{ijk}(y)u^iv^jw^k$ where
\[
{\bar M}_{ijk}:=L_{ijk} - {1\over n+1}  \{ J_i h_{jk} + J_j h_{ik} + J_k h_{ij} \}.
\]
A Finsler metric $F$ is said to be P-reducible if   ${\bf\bar M}_y=0$. The notion of P-reducibility was given by Matsumoto-Shimada \cite{MS}. It is obvious that every C-reducible metric is a P-reducible metric.

\bigskip

Define the stretch curvature ${\bf \Sigma}_y:T_xM\otimes T_xM \otimes T_xM  \otimes T_xM\rightarrow \mathbb{R}$ by ${\bf \Sigma}_y(u, v, w,z):={\Sigma}_{ijkl}(y)u^iv^jw^kz^l$, where
\[
{\Sigma}_{ijkl}:=2(L_{ijk|l}-L_{ijl|k}).
\]
A Finsler metric  is said to be   stretch metric if ${\bf \Sigma}=0$. The notion of stretch curvature was introduced by L. Berwald   a generalization of Landsberg curvature \cite{Be1}. He proved that this stretch curvature of a Finsler manifold $M$ vanishes if and only if the length of a vector remains unchanged under the parallel displacement along an infinitesimal parallelogram. It is easy to see that, every Landsberg metric is a  stretch metric.

\bigskip

The Riemann curvature ${\bf K}_y= K^i_{\ k}  dx^k \otimes \pxi|_x : T_xM \to T_xM$ is a family of linear maps on tangent spaces,
defined by
\[
K^i_{\ k} = 2 {\pa G^i\over \pa x^k}-y^j{\pa^2 G^i\over \pa x^j\pa y^k}
+2G^j {\pa^2 G^i \over \pa y^j \pa y^k} - {\pa G^i \over \pa y^j}{\pa G^j \over \pa y^k}.  \label{Riemann}
\]
For a flag $P={\rm span}\{y, u\} \subset T_xM$ with flagpole $y$, the flag curvature ${\bf K}={\bf K}(P, y)$ is defined by
\[
{\bf K}(P, y):= {\g_y (u, {\bf K}_y(u))
\over \g_y(y, y) \g_y(u,u)
-\g_y(y, u)^2 }.
\]
We say that  a Finsler metric $F$ is   of scalar curvature if for any $y\in T_xM$, the flag curvature ${\bf K}= {\bf K}(x, y)$ is a scalar function on the slit tangent bundle $TM_0$.

\section{Proof of Theorem \ref{MainTheorem1}}

In this section, we are going to prove the Theorem \ref{MainTheorem1}. To prove it, we need the following.
\begin{lem}
Let $(M, F)$ be a generalized P-reducible Finsler manifold. Then the Matsumoto torsion of $F$ satisfy in following
\begin{equation} \label{P7}
M_{ijk|s}y^s=\lambda(x,y)M_{ijk}.
\end{equation}
\end{lem}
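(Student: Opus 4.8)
The plan is to split the argument into two independent pieces: first a connection-level identity valid for \emph{any} Finsler metric, namely $M_{ijk|s}y^s=\bar M_{ijk}$, and then a purely algebraic reduction of $\bar M_{ijk}$ using the generalized P-reducibility hypothesis.

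First I would differentiate the defining relation $M_{ijk}=C_{ijk}-\frac{1}{n+1}\{I_ih_{jk}+I_jh_{ik}+I_kh_{ij}\}$ horizontally and contract with $y^s$. By the definitions recalled in Section~2, $C_{ijk|s}y^s=L_{ijk}$ and $I_{i|s}y^s=J_i$. The only term requiring work is $h_{jk|s}y^s$. Writing $h_{jk}=g_{jk}-F^{-2}y_jy_k$ and using $F_{|s}=0$, $y^j_{|s}=0$ for the Berwald connection, together with the compatibility identity $g_{ij|k}=-2L_{ijk}$, one obtains $g_{jk|s}y^s=-2L_{jks}y^s$ and $y_{j|s}y^s=g_{jp|s}y^sy^p=-2L_{jps}y^py^s$. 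Since every contraction of $L_{ijk}$ with $y$ vanishes (this follows from $C_{ijk}y^k=0$ after horizontal differentiation), both expressions are zero, so $h_{jk|s}y^s=0$. Substituting these three facts yields
\[
M_{ijk|s}y^s=L_{ijk}-\frac{1}{n+1}\{J_ih_{jk}+J_jh_{ik}+J_kh_{ij}\}=\bar M_{ijk}.
\]

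Next I would exploit the hypothesis $L_{ijk}=\lambda C_{ijk}+a_ih_{jk}+a_jh_{ki}+a_kh_{ij}$. Contracting with $g^{jk}$ and using $g^{jk}C_{ijk}=I_i$, $g^{jk}h_{jk}=n-1$, $g^{jk}h_{ki}=\delta^j_i-F^{-2}y^jy_i$ and $a_jy^j=0$, a short computation gives $J_i=\lambda I_i+(n+1)a_i$, hence $a_i=\frac{1}{n+1}(J_i-\lambda I_i)$. Plugging both $L_{ijk}$ and this expression for $a_i$ into $\bar M_{ijk}=L_{ijk}-\frac{1}{n+1}\{J_ih_{jk}+J_jh_{ik}+J_kh_{ij}\}$, the three $a$-contributions coming from $L_{ijk}$ cancel exactly the $(n+1)a_i$ parts of the $J$-terms, using the symmetry $h_{ki}=h_{ik}$, and only the $\lambda I$-parts survive:
\[
\bar M_{ijk}=\lambda\Big(C_{ijk}-\tfrac{1}{n+1}\{I_ih_{jk}+I_jh_{ik}+I_kh_{ij}\}\Big)=\lambda M_{ijk}.
\]
Combining this with the first identity gives $M_{ijk|s}y^s=\lambda M_{ijk}$, which is \eqref{P7}.

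The main obstacle I anticipate is the first step, specifically verifying $h_{jk|s}y^s=0$: this is the only place where genuine Finsler machinery (the Berwald-connection metric-compatibility identity and the orthogonality relations $L_{ijk}y^k=0$) enters, whereas the remaining steps are elementary tensor algebra. Care is also needed to track the symmetry $h_{ki}=h_{ik}$ so that the $a$-terms cancel precisely; a sign or index slip there would spoil the clean factor $\lambda$ in front of $M_{ijk}$.
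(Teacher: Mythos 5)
Your proof is correct and follows essentially the same route as the paper: contract the generalized P-reducibility relation with the metric to obtain $J_i=\lambda I_i+(n+1)a_i$, solve for $a_i$, substitute back to get $\bar M_{ijk}=\lambda M_{ijk}$, and identify $\bar M_{ijk}$ with $M_{ijk|s}y^s$. The only difference is that you explicitly justify the identity $M_{ijk|s}y^s=\bar M_{ijk}$ (via $h_{jk|s}y^s=0$ and the orthogonality of $L$ to $y$), a step the paper merely asserts by declaring its equation (\ref{P6}) ``equivalent'' to (\ref{P7}).
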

\begin{proof}  Let $F$ be a generalized P-reducible metric
\begin{equation} \label{P2}
L_{ijk}=\lambda C_{ijk}+ a_ih_{jk}+a_jh_{ki}+a_kh_{ij}.
\end{equation}
Contracting (\ref{P2}) with $g^{ij}$  and using the relations
\[
g^{ij}h_{ij}=n-1\ \  \textrm{and} \ \ g^{ij}(a_ih_{jk})=g^{ij}(a_jh_{ik})=a_k
\]
implies that
\begin{equation} \label{P3}
J_k=\lambda I_k+ (n+1)a_k.
\end{equation}
Then
\begin{equation} \label{P4}
a_i=\frac{1}{n+1}J_i-\frac{\lambda}{n+1} I_i.
\end{equation}
Putting (\ref{P4}) in (\ref{P2}) yields
\begin{eqnarray}
\nonumber L_{ijk}= \lambda C_{ijk}\!\!\!\!&+&\!\!\!\!\ \frac{1}{n+1}\{J_ih_{jk}+J_jh_{ki}+J_kh_{ij}\}\\ \!\!\!\!&-&\!\!\!\!\ \frac{\lambda}{n+1}\{I_ih_{jk}+I_jh_{ki}+I_kh_{ij}\}.\label{P5}
\end{eqnarray}
By simplifying  (\ref{P5}), we get
\begin{equation}
L_{ijk}-\frac{1}{n+1}(J_ih_{jk}+J_jh_{ki}+J_kh_{ij})= \lambda \{C_{ijk}- \frac{1}{n+1}(I_ih_{jk}+I_jh_{ki}+I_kh_{ij})\}.\label{P6}
\end{equation}
The equation   (\ref{P6}) is equivalent to (\ref{P7}).
\end{proof}

\bigskip

\begin{lem}{\rm (\cite{Mo1})} Landsberg curvature and Riemann Curvature are related by the following equation
\begin{eqnarray}
L_{ijk|m}y^m + C_{ijm}R^m_{\ \ k} & = &  - {1\over 3}g_{im}R^m_{\ \ k\cdot j}
- {1\over 3} g_{jm} R^m_{\ \ k\cdot i}\nonumber\\
& &  - {1\over 6} g_{im}R^m_{\ \ j\cdot k}
- {1\over 6} g_{jm}R^m_{\ \ i\cdot k}. \label{Moeq1}
\end{eqnarray}
Contracting (\ref{Moeq1}) with $g^{ij}$ gives
\begin{equation}
J_{k|m}y^m + I_mR^m_{\  k}  = -  {1\over 3}\Big \{ 2 K^m_{\   k\cdot m} +  K^m_{\   m\cdot k}\Big \}. \label{Moeq2}
\end{equation}
\end{lem}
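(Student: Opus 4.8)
The substantive part of this lemma is the first identity (\ref{Moeq1}); I would take it as established in \cite{Mo1}, since it is a Riemann--Landsberg curvature identity whose derivation needs the full Berwald curvature apparatus. The natural route is to express the Landsberg tensor through the Berwald curvature and relate the horizontal derivative of the latter along the flagpole to the vertical derivatives of the Riemann curvature, the term $C_{ijm}R^m_{\ k}$ being precisely the commutator correction produced by the non-metric-compatibility of the Berwald connection. Granting (\ref{Moeq1}), the only thing left to prove is that contracting it with $g^{ij}$ yields (\ref{Moeq2}), and that contraction is what I would carry out in detail.

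For the left-hand side I would use the defining contractions $I_m=g^{ij}C_{ijm}$ and $J_k=g^{ij}L_{ijk}$. The point requiring care is that the Berwald connection is not metric-compatible, so $g^{ij}$ cannot be moved through the covariant derivative for free; the contraction with $y^m$ is what removes the obstruction. Indeed, $g_{ij|m}=-2L_{ijm}$, and $L_{ijm}y^m=0$ (which follows from $C_{ijm}y^m=0$ together with $y^m_{|s}=0$), so $g_{ij|m}y^m=0$ and hence $g^{ij}_{|m}y^m=0$. Therefore
\[
g^{ij}L_{ijk|m}y^m=(g^{ij}L_{ijk})_{|m}y^m=J_{k|m}y^m,
\]
while the remaining term contracts directly to $g^{ij}C_{ijm}R^m_{\ k}=I_mR^m_{\ k}$, reproducing the left side of (\ref{Moeq2}).

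For the right-hand side I would apply $g^{ij}g_{im}=\delta^j_m$ term by term, noting that $g^{ij}$ contracts only the free tensor indices and not the vertical derivative ``$\cdot$''. The two terms carrying the coefficient $-\frac{1}{3}$ each collapse to $-\frac{1}{3}R^m_{\ k\cdot m}$, and the two carrying $-\frac{1}{6}$ each to $-\frac{1}{6}R^m_{\ m\cdot k}$, so the total is $-\frac{1}{3}\{2R^m_{\ k\cdot m}+R^m_{\ m\cdot k}\}$. Identifying the Riemann curvature $R^i_{\ k}$ with the quantity $K^i_{\ k}$ of the Preliminaries (the same ${\bf K}_y$ in different notation) then gives exactly (\ref{Moeq2}). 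The genuine obstacle lies not in this bookkeeping but in (\ref{Moeq1}) itself --- pinning down the coefficients $-\frac{1}{3}$, $-\frac{1}{6}$ and the precise index symmetrization through the relevant Bianchi identity --- which is the step I would quote from \cite{Mo1} rather than reprove here.
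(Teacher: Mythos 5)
Your proposal is correct and matches the paper's treatment: the paper gives no proof of this lemma at all, simply quoting both identities from \cite{Mo1}, so deferring (\ref{Moeq1}) to that reference is exactly what the authors do. Your verification of the contraction step --- including the observation that $g_{ij|m}y^m=-2L_{ijm}y^m=0$ makes $g^{ij}$ effectively parallel along the flagpole, and the identification of $R^i_{\ k}$ with the $K^i_{\ k}$ of the Preliminaries --- is sound and in fact supplies detail the paper omits.
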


\bigskip

\noindent {\bf Proof of Theorem \ref{MainTheorem1}}:  Let $F$ be a generalized P-reducible Finsler metric  of scalar curvature ${\bf K}= {\bf K}(x, y)$. This is equivalent to the following identity:
\be
R^i_{\ k} = {\bf K} F^2 \; h^i_k, \label{Kikiso1}
\ee
where $h^i_k := g^{ij} h_{jk}$.
Differentiating (\ref{Kikiso1}) yields
\be
R^i_{\ k\cdot l}
=  {\bf K}_{\cdot l} F^2 \; h^i_k + {\bf K} \Big \{ 2 g_{lp}y^p \delta^i_k - g_{kp}y^p  \delta^i_l- g_{kl} y^i  \Big \}.\label{MKdiff}\ee
By  (\ref{Moeq1}),  (\ref{Moeq2}) and (\ref{MKdiff}), we obtain
\be
 L_{ijk|m}y^m =  - {1\over 3}F^2 \Big \{ {\bf K}_{\cdot i} h_{jk}   + {\bf K}_{\cdot j} h_{ik}  + {\bf K}_{\cdot k} h_{ij} + 3 {\bf K} C_{ijk}\Big \} \label{AZeq1}
\ee
and
\be
J_{k|m}y^m = - {1\over 3}F^2\Big \{ (n+1) {\bf K}_{\cdot k} + 3{\bf K} I_k \Big \}.\label{AZeq2}
\ee
On the other hand, we have

\be
L_{ijk} = C_{ijk|m}y^m, \ \ \ \ \ \ J_i = I_{i|m}y^m.\label{LJE}
\ee
By (\ref{LJE}), we get
\[ C_{ijk|p|q}y^py^q = L_{ijk|m}y^m, \ \ \ \ \ I_{k|p|q}
y^py^q = J_{k|m}y^m.
\]
Then we get
\be
M_{ijk|p|q} = L_{ijk|m}y^m - {1\over n+1} \Big \{ J_{i|m}y^m h_{jk}
+ J_{j|m}y^m h_{ik} + J_{k|m}y^m h_{ij} \Big \}.
\label{AZeq3}
\ee
Plugging  (\ref{AZeq1}) and (\ref{AZeq2}) into (\ref{AZeq3}) yields
\be
 M_{ijk|p|q} y^p y^q + {\bf K} F^2 M_{ijk} =0. \label{Sijk}
\ee
By (\ref{P7}) we have
\begin{eqnarray} \label{P9}
\nonumber M_{ijk|p|q} y^p y^q \!\!\!\!&=&\!\!\!\!\ \lambda'(t)  M_{ijk}+\lambda(t)  M_{ijk|p} y^p\\
\!\!\!\!&=&\!\!\!\!\ (\lambda'+\lambda^2) M_{ijk}.
\end{eqnarray}
By (\ref{Sijk}) and (\ref{P9}) we get
\begin{equation} \label{P10}
(\lambda'+\lambda^2+{\bf K}F^2) M_{ijk}=0.
\end{equation}
By assumption $\lambda'+\lambda^2+{\bf K}F^2\neq 0$, then $ M_{ijk}=0$. Then by definition, $F$ is a C-reducible metric.
\qed

\bigskip

By Theorem \ref{MainTheorem1}, we have the following.

\begin{cor}
Let $(M, F)$ be a Finsler metric of non-zero scalar flag curvature. Suppose that $F$ be a P-reducible metric. Then $F$ is  a C-reducible metric.
\end{cor}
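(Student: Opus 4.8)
The plan is to obtain this corollary as an immediate specialization of Theorem \ref{MainTheorem1}, by recognizing a P-reducible metric as precisely the case $\lambda\equiv 0$ of a generalized P-reducible metric. In that reading the entire argument reduces to checking that the nonvanishing hypothesis $\lambda'+\lambda^2+{\bf K}F^2\neq 0$ of Theorem \ref{MainTheorem1} is automatically satisfied under the present assumptions, after which C-reducibility follows verbatim.

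First I would make the identification explicit. By definition $F$ is P-reducible when ${\bf \bar M}_y=0$, that is $L_{ijk}=\frac{1}{n+1}(J_ih_{jk}+J_jh_{ik}+J_kh_{ij})$. Setting $a_i:=\frac{1}{n+1}J_i$ — which is homogeneous of degree $0$ and satisfies $a_iy^i=\frac{1}{n+1}J_iy^i=0$ — this reads $L_{ijk}=a_ih_{jk}+a_jh_{ki}+a_kh_{ij}$, i.e. exactly the defining relation (\ref{P2}) of a generalized P-reducible metric with $\lambda\equiv 0$. Hence every P-reducible metric is generalized P-reducible, and the governing scalar collapses: $\lambda=0$ forces $\lambda'=\lambda_{|l}y^l=0$ and $\lambda^2=0$, so that $\lambda'+\lambda^2+{\bf K}F^2={\bf K}F^2$.

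It then remains only to verify ${\bf K}F^2\neq 0$, and this is the single step where the hypotheses genuinely do any work: since $F>0$ on $TM_0$ and the scalar flag curvature ${\bf K}$ is nowhere zero, we get ${\bf K}F^2\neq 0$, whence $\lambda'+\lambda^2+{\bf K}F^2\neq 0$. Thus the hypothesis of Theorem \ref{MainTheorem1} is met, its equation (\ref{P10}) specializes to ${\bf K}F^2\,M_{ijk}=0$, and therefore $M_{ijk}=0$, i.e. $F$ is C-reducible. I expect no real obstacle beyond this bookkeeping — the only subtle point is the strict inequality ${\bf K}F^2\neq0$, which uses simultaneously the positivity of $F^2$ on the slit tangent bundle and the assumption that the flag curvature is non-zero; all the rest is just recording that P-reducibility is the $\lambda\equiv0$ instance of generalized P-reducibility.
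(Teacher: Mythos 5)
Your proposal is correct and follows exactly the route the paper intends: the corollary is stated as an immediate consequence of Theorem \ref{MainTheorem1}, obtained by viewing a P-reducible metric as the $\lambda\equiv 0$ case of a generalized P-reducible metric (with $a_i=\frac{1}{n+1}J_i$) and noting that the hypothesis $\lambda'+\lambda^2+{\bf K}F^2\neq 0$ collapses to ${\bf K}F^2\neq 0$, which holds since ${\bf K}\neq 0$ and $F>0$ on $TM_0$. You have merely filled in the bookkeeping that the paper leaves implicit.
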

%-----------------------------------------------------------------------------------------------------------------------
\section{Proof of Theorem \ref{MainTheorem2}}
%-----------------------------------------------------------------------------------------------------------------------
In this section, we are going to prove the Theorem \ref{MainTheorem2}.
\bigskip

\noindent {\bf Proof of Theorem \ref{MainTheorem2}}: Let $F$ be a stretch metric
\be
L_{ijk|l}-L_{ijl|k}=0.\label{S1}
\ee
Contracting (\ref{S1}) with $y^l$ yields
\be
L_{ijk|l}y^l=0.\label{S2}
\ee
By definition, we have
\begin{equation}
L_{ijk}=\lambda C_{ijk}+ a_ih_{jk}+a_jh_{ki}+a_kh_{ij}.\label{S3}
\end{equation}
Taking a horizontal derivation of (\ref{S3}) implies that
\be
L_{ijk|l}y^l=\lambda' C_{ijk}+\lambda L_{ijk}+ a'_ih_{jk}+a'_jh_{ki}+a'_kh_{ij},\label{S4}
\ee
where  $a'_i=a_{i|l}y^l$. By putting  (\ref{S3}) in  (\ref{S4}), it follows that
\be
L_{ijk|l}y^l=(\lambda'+\lambda^2) C_{ijk}+(\lambda a_i+ a'_i)h_{jk}+(\lambda a_j+a'_j)h_{ki}+(\lambda a_k+a'_k)h_{ij}.\label{S5}
\ee
By (\ref{S2}) and   (\ref{S5}), we have
\be
 C_{ijk}=\frac{-1}{\lambda'+\lambda^2}\Big[(\lambda a_i+ a'_i)h_{jk}+(\lambda a_j+a'_j)h_{ki}+(\lambda a_k+a'_k)h_{ij}\Big].\label{S6}
\ee
Multiplying   (\ref{S6}) with $g^{ij}$ yields
\be
I_k=\frac{-(n+1)}{\lambda'+\lambda^2}(\lambda a_k+a'_k),\label{S7}
\ee
or equivalently
\be
\lambda a_k+a'_k=-\frac{\lambda'+\lambda^2}{n+1} I_k.\label{S8}
\ee
By plugging   (\ref{S8}) in   (\ref{S6}), we have
\be
C_{ijk}=\frac{1}{n+1}(I_ih_{jk}+I_jh_{ki}+I_kh_{ij}).
\ee
This means that $F$ is a C-reducible metric.
\qed

\bigskip

There is an equal definition for Landsberg manifolds based on Finslerian connections. A Finsler manifold is called a Landsberg manifold if the Chern connection coincides with the Berwald connection.  With this definition of Landsberg manifolds,  Bejancu-Farran  introduce a new class of Finsler manifolds called generalized Landsberg manifolds. A  Finsler manifold  is said to be generalized Landsberg manifold if the $h$-curvature tensors of the Berwald and Chern connections coincide \cite{BF}. The relation between h-curvatures of Berwald and Chern connections is given by
\begin{equation}\label{GL1}
H^i_{jkl}=R^i_{\ jkl}+[L^i_{\ jl|k}-L^i_{\ jk|l}+L^i_{\ sk}L^s_{\ jl}-L^i_{\ sl}L^s_{\ jk}],
\end{equation}
where $H$ and $R$ are h-curvatures of Berwald and Chern connections respectively. Thus $F$ is a generalized
Landsberg metric if we have
\begin{equation}\label{GL2}
L^i_{\ jl|k}-L^i_{\ jk|l}+L^i_{\ sk}L^s_{\ jl}-L^i_{\ sl}L^s_{\ jk}=0.
\end{equation}
\begin{lem}\label{LemQ}
Let $(M, F)$ be a Finsler manifold. Then $F$ is a generalized Landsberg metric if  and only if the following equations hold
\begin{eqnarray}
&&L_{isk}L^s_{\ jl}-L_{isl}L^s_{\ jk}=0,\label{GL4} \\
&&L_{ijl|k}-L_{ijk|l}=0.\label{GL5}
\end{eqnarray}
\end{lem}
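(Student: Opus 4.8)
The plan is to reduce the defining condition (\ref{GL2}) of a generalized Landsberg metric to the pair (\ref{GL4})--(\ref{GL5}) by lowering the free upper index and then splitting the resulting tensor identity according to its symmetry in the first two indices. Lowering the index $i$ in (\ref{GL2}) by contracting with $g_{im}$ turns it into
\[
(L_{ijl|k}-L_{ijk|l})+(L_{isk}L^s_{\ jl}-L_{isl}L^s_{\ jk})=0,
\]
so it suffices to show that the two bracketed expressions vanish separately and that this is equivalent to the single displayed equation. The reverse implication is immediate: if (\ref{GL4}) and (\ref{GL5}) hold, then each bracket is zero, the equation above holds, and raising the index $i$ recovers (\ref{GL2}), so $F$ is a generalized Landsberg metric.

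For the forward implication, the decisive observation is that the two brackets carry opposite symmetry type under the interchange $i\leftrightarrow j$. Because the Landsberg tensor $L_{ijk}$ is totally symmetric, the derivative bracket $L_{ijl|k}-L_{ijk|l}$ is \emph{symmetric} in $i,j$. The quadratic bracket, however, is \emph{skew-symmetric} in $i,j$: writing it as $g^{sm}(L_{isk}L_{mjl}-L_{isl}L_{mjk})$, performing the swap $i\leftrightarrow j$, relabelling the summation indices $s\leftrightarrow m$, and applying the total symmetry $L_{ijk}=L_{jik}=L_{ikj}$ returns the negative of the original expression. (Both brackets happen to be skew in $k,l$ as well, but that shared symmetry does not help to separate them.)

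Granting these two symmetry types, the conclusion follows by decomposing the displayed identity into its symmetric and skew-symmetric parts in $i,j$. Symmetrizing in $i,j$ annihilates the skew quadratic bracket and leaves $L_{ijl|k}-L_{ijk|l}=0$, which is (\ref{GL5}); antisymmetrizing in $i,j$ annihilates the symmetric derivative bracket and leaves $L_{isk}L^s_{\ jl}-L_{isl}L^s_{\ jk}=0$, which is (\ref{GL4}). This yields both required equations, completing the forward direction and hence the equivalence.

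I expect the only genuinely computational step to be the verification that the quadratic term is skew-symmetric in $i,j$; this requires careful bookkeeping of the metric contraction together with repeated use of the total symmetry of $L_{ijk}$. Once that symmetry is in hand, everything else is purely formal: the derivative bracket's symmetry is read off directly, and the final splitting into symmetric and skew parts is the standard projection argument. The remaining care needed is simply to confirm that lowering and raising the index $i$ is a reversible operation that does not lose information, so that the equivalence between the original condition and the split pair is genuine in both directions.
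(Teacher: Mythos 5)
Your proof is correct and follows essentially the same route as the paper: the authors also fix $k,l$, set $Q_{ij}=L_{ijl|k}-L_{ijk|l}+L_{isk}L^s_{\ jl}-L_{isl}L^s_{\ jk}$, and split it into its symmetric and antisymmetric parts in $i,j$, identifying these with the derivative bracket and the quadratic bracket respectively. Your verification that the quadratic term is skew in $i,j$ (swap $i\leftrightarrow j$, relabel the contracted indices, use total symmetry of $L$) is exactly the computation the paper performs.
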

\begin{proof}
Fix $k$ and $l$ and  put
\[
Q_{ij}:=L_{ijl|k}-L_{ijk|l}+L_{isk}L^s_{\ jl}-L_{isl}L^s_{\ jk}.
\]
One can write
\[
Q_{ij}:=Q^s_{ij}+Q^a_{ij},
\]
where
\[
Q^s_{ij}:=\frac{1}{2}(Q_{ij}+Q_{ji}),\ \ \textrm{and} \ \ Q^a_{ij}:=\frac{1}{2}(Q_{ij}-Q_{ji}).
\]
It is easy to see  that $Q_{ij}=0$ if and only if $Q^s_{ij}=0$ and $Q^a_{ij}=0$. On the other hand, we have
\begin{eqnarray*}
Q_{ji}\!\!\!\!&=&\!\!\!\!\ L_{jil|k}-L_{jik|l}+L_{jsk}L^s_{\ il}-L_{\ jsl}L^s_{\ ik}\\
\!\!\!\!&=&\!\!\!\!\ L_{ijl|k}-L_{ijk|l}+L^s_{\ jk}L_{sil}-L^s_{\ jl}L_{sik}.
\end{eqnarray*}
Hence
\[
Q^s_{ij}=L_{ijl|k}-L_{ijk|l},
\]
and consequently
\[
Q^a_{ij}=L_{isk}L^s_{\ jl}-L_{isl}L^s_{\ jk}.
\]
This proves the Lemma.
\end{proof}

Therefore, every generalized Landsberg metric is a stretch metric. By the Theorem \ref{MainTheorem2}, we have the following.
\begin{cor}
Let $(M, F)$ be a generalized  P-reducible Finsler manifold with dimension $n\geq 3$. Suppose that $F$ be a  generalized Landsberg  metric. Then $F$ is  a  Randers metric.
\end{cor}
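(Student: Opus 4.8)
The plan is to reduce the statement to the two results already at our disposal, namely Theorem \ref{MainTheorem2} and the Matsumoto--H\={o}j\={o} characterization of Randers metrics recorded in Lemma \ref{MaHo}. The only genuinely new input required is the observation that a generalized Landsberg metric is, in particular, a stretch metric; once that is in place the corollary follows by a short chain of implications.

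First I would invoke Lemma \ref{LemQ}, which splits the generalized Landsberg condition (\ref{GL2}) into its symmetric part (\ref{GL5}), namely $L_{ijl|k}-L_{ijk|l}=0$, and its antisymmetric part (\ref{GL4}). The key point is that (\ref{GL5}) is exactly the vanishing of the stretch curvature: by definition $\Sigma_{ijkl}=2(L_{ijk|l}-L_{ijl|k})$, so $L_{ijl|k}-L_{ijk|l}=0$ is equivalent to $\Sigma_{ijkl}=0$. Hence every generalized Landsberg metric is a stretch metric. Note that in this step only the symmetric relation (\ref{GL5}) is used; the antisymmetric relation (\ref{GL4}) plays no role in the argument.

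Next, since $F$ is assumed to be generalized P-reducible and has just been shown to be a stretch metric, Theorem \ref{MainTheorem2} applies directly and yields that $F$ is C-reducible, i.e. its Matsumoto torsion vanishes, ${\bf M}_y=0$. Finally, because $F$ is positive definite and the manifold has dimension $n\geq 3$, Lemma \ref{MaHo} converts the vanishing of the Matsumoto torsion into the desired conclusion that $F$ is a Randers metric.

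There is essentially no serious obstacle in this argument: it is a composition of implications, each link supplied by a lemma or theorem established earlier in the paper. The only point I would check with care is the index and sign matching between the stretch tensor $\Sigma_{ijkl}$ and the symmetric component (\ref{GL5}) extracted in Lemma \ref{LemQ}, but this is immediate from the defining formula for $\Sigma_{ijkl}$. I would therefore present the corollary as a brief one-paragraph deduction rather than as a substantial proof.
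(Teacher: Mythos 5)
Your proposal is correct and follows the same route as the paper: Lemma \ref{LemQ} shows the symmetric part (\ref{GL5}) of the generalized Landsberg condition is exactly $\Sigma_{ijkl}=0$, so Theorem \ref{MainTheorem2} gives C-reducibility and Lemma \ref{MaHo} then yields that $F$ is a Randers metric. No gaps.
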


\noindent
Esmaeil Peyghan\\
Faculty  of Science, Department of Mathematics\\
Arak University\\
Arak. Iran\\
Email: epeyghan@gmail.com

\bigskip

\noindent
Akbar Tayebi\\
Faculty  of Science, Department of Mathematics\\
Qom University\\
Qom, Iran\\
Email:\ akbar.tayebi@gmail.com.

\bigskip

\noindent
Abbas Heydari\\
Faculty of Science, Department of Mathematics\\
Modares University\\
Tehran. Iran\\
Email: aheydari@modares.ac.ir.
\end{document}